\title{Normal Ordering in the Algebra Generated by $x$ and $\mathrm{I}$ and a Combinatorial Generalization of Bessel Numbers}
\author{Abdelhay Benmoussa\\
Anssis School Complex\\
Tiqqi, Agadir Ida Outanane\\
Morocco\\
\texttt{abdelhay.benmoussa@taalim.ma}}
\begin{document}

\maketitle  

\newtheorem{corollary}{Corollary}
\newtheorem{theorem}{Theorem}
\newtheorem{proof}{Proof}
\begin{abstract}
We investigate the algebra generated by the operators $x$ and $\mathrm{I} = \int_0^x$, which satisfy the commutation relation
\[
[\mathrm{I},x] = \mathrm{I}x - x\mathrm{I} = - \mathrm{I}^2.
\]  
We develop a combinatorial framework for the normal ordering of words in this algebra and show that any word can be written in the form
\[
w = \sum_{i,j} c(i,j) \, x^i \mathrm{I}^j,
\]
where the coefficients $c(i,j)$ are signed integers. Focusing on powers of the operator $(x\mathrm{I})^n$, we demonstrate that the corresponding coefficients coincide with the classical Bessel numbers (OEIS A001498). We further extend this analysis to powers of the generalized operators $(x^\lambda \mathrm{I}^\delta)^n$ and, finally, provide an explicit normal-ordered expression for an arbitrary word.
\end{abstract}

\section{Introduction}

Normal ordering problems in operator algebras have a long history, beginning with the classical Weyl algebra. In the Weyl algebra, the Scherk identity (1823) states
\[
(x D)^n = \sum_{k=0}^{n} S(n,k) x^k D^k,
\]
where $S(n,k)$ are the Stirling numbers of the second kind.  

In this paper, we consider an analogous algebra generated by $x$ and $\mathrm{I} = \int_0^x$ with commutation relation
\[
[\mathrm{I}, x] = -\mathrm{I}^2,
\]
and study the normal ordering of powers of $(x\mathrm{I})$ and its generalizations.

\section{Combinatorial Framework}

\subsection{Algebra Generated by $x$ and $\mathrm{I}$}

Any word $w$ in this algebra can be written uniquely as
\[
w = \sum_{i,j} c(i,j) x^i \mathrm{I}^j,
\]
with $c(i,j) \in \textbf{Z}$.

\subsection{Normal Ordering of $(x\mathrm{I})^n$}

We first consider the normal ordering of $(x\mathrm{I})^n$.

\begin{theorem}
For $n \ge 1$,
\[
(x \mathrm{I})^n = \sum_{k=0}^{n-1} (-1)^k a(n-1,k) x^{n-k} \mathrm{I}^{n+k},
\]
where $a(n,k)$ are the \textbf{Bessel numbers} (OEIS A001498).
\end{theorem}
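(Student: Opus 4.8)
The plan is to prove the identity by induction on $n$, building $(x\mathrm{I})^{n+1}$ from $(x\mathrm{I})^n$ by multiplying on the \emph{right} by $x\mathrm{I}$. This direction is decisive: writing $(x\mathrm{I})^{n+1} = (x\mathrm{I})^n\,(x\mathrm{I})$ means each summand of the inductive hypothesis meets the new $x$ as a block $\mathrm{I}^{n+k}$ passing a single $x$, which is controlled by one clean lemma; the opposite grouping $x\mathrm{I}\,(x\mathrm{I})^n$ would instead require moving a single $\mathrm{I}$ past a power $x^{n-k}$ and would produce a messier double sum. The base case $n=1$ is immediate, since the right-hand side reduces to $a(0,0)\,x\mathrm{I} = x\mathrm{I}$.

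First I would record the one auxiliary commutation lemma. Starting from $\mathrm{I}x = x\mathrm{I} - \mathrm{I}^2$ and iterating, a short induction on $j$ gives
\[
\mathrm{I}^j x = x\mathrm{I}^j - j\,\mathrm{I}^{j+1}.
\]
This is the only algebraic manipulation needed, and it is applied exactly once per term, with $j = n+k$.

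Next I would substitute the inductive hypothesis and apply the lemma term by term. Each summand $x^{n-k}\mathrm{I}^{n+k}$ yields, after right-multiplication by $x\mathrm{I}$,
\[
x^{n-k}\mathrm{I}^{n+k}x\mathrm{I} = x^{(n+1)-k}\mathrm{I}^{(n+1)+k} - (n+k)\,x^{(n+1)-(k+1)}\mathrm{I}^{(n+1)+(k+1)}.
\]
Collecting the coefficient of $x^{(n+1)-k'}\mathrm{I}^{(n+1)+k'}$, the first contribution occurs at $k'=k$ and the second at $k'=k+1$, giving a two-term recurrence for $b(n,k) := (-1)^k a(n-1,k)$:
\[
b(n+1,k') = b(n,k') - (n+k'-1)\,b(n,k'-1).
\]
Substituting $b(n,k) = (-1)^k a(n-1,k)$ and cancelling the common sign $(-1)^{k'}$ converts this into
\[
a(n,k') = a(n-1,k') + (n+k'-1)\,a(n-1,k'-1),
\]
which is the recurrence satisfied by the Bessel triangle A001498, as is readily checked from the closed form $a(n,k)=\tfrac{(n+k)!}{2^k\,k!\,(n-k)!}$.

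The final step is to confirm that this recurrence, with the boundary data forced by the induction, reproduces A001498. I expect the main obstacle to lie in the bookkeeping at the two ends of the index range: the $k'=0$ term, where only the first contribution survives and must reproduce $a(n,0)=1$, and the $k'=n$ term, where only the second survives and must reproduce $a(n,n)=(2n-1)\,a(n-1,n-1)$. Handling these uniformly requires fixing the conventions $a(n,k)=0$ for $k<0$ or $k>n$, so that the single two-term recurrence governs every index and the inductive claim closes without separate end-point arguments.
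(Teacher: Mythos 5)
Your proof is correct, but it takes a genuinely different route from the paper. The paper builds the induction by multiplying on the \emph{left}, $(x\mathrm{I})^{n+1} = x\mathrm{I}\,(x\mathrm{I})^n$, which forces it to push a single $\mathrm{I}$ past a power $x^{n-k}$ via the full integration-by-parts expansion $\mathrm{I}\bigl(t^{\alpha}\mathrm{I}^{\beta}f\bigr) = \sum_{j=0}^{\alpha}(-1)^j(\alpha)_j\,x^{\alpha-j}\mathrm{I}^{\beta+1+j}f$; this produces exactly the double sum you predicted, and after reindexing the paper arrives at the convolution recurrence $a(n,i)=\sum_{k=0}^{\min(n-1,i)}(n-k)_{i-k}\,a(n-1,k)$, whose identification with the Bessel recurrence $a(n,k)=a(n-1,k)+(n-k+1)a(n,k-1)$ is asserted as ``straightforward to verify'' but not carried out. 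Your right-multiplication grouping $(x\mathrm{I})^n(x\mathrm{I})$ sidesteps all of this: the only commutation needed is one $x$ past $\mathrm{I}^{n+k}$, handled by the two-term lemma $\mathrm{I}^j x = x\mathrm{I}^j - j\mathrm{I}^{j+1}$, so you land directly on the two-term recurrence $a(n,k)=a(n-1,k)+(n+k-1)a(n-1,k-1)$, which you can (and do) check against the closed form $a(n,k)=\tfrac{(n+k)!}{2^k k!(n-k)!}$ — note your recurrence is not literally the one quoted in the paper, but both generate A001498, and your closed-form verification together with the boundary conventions $a(n,k)=0$ for $k<0$ or $k>n$ makes the identification airtight rather than asserted. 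What the paper's heavier machinery buys in exchange is reusability: the multi-term expansion of $\mathrm{I}$ acting on $x^{\alpha}\mathrm{I}^{\beta}$ is precisely what powers the later theorems on $(x^{\lambda}\mathrm{I}^{\delta})^n$ and arbitrary words, where your trick no longer applies because the right-hand factor $x^{\lambda}\mathrm{I}^{\delta}$ with $\lambda>1$ would again require moving a full power of $x$ past a power of $\mathrm{I}$. Your argument is the cleaner and more rigorous proof of this particular theorem; the paper's is the one that scales.
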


\begin{proof}
We prove the theorem by induction on $n$. For $n=1$, the statement is immediate.  

Assume it holds for some $n \ge 1$, i.e.,
\[
(x\mathrm{I})^n  = \sum_{k=0}^{n-1} (-1)^k a(n-1,k) x^{n-k} \mathrm{I}^{\,n+k}.
\]

Then,
\[
(x\mathrm{I})^{n+1} f(x) = x\mathrm{I}\Big((t\mathrm{I})^n f(t)\Big)(x)
= \sum_{k=0}^{n-1} (-1)^k a(n-1,k) x\mathrm{I} \big(t^{\,n-k} \mathrm{I}^{\,n+k} f(t)\big)(x).
\]

Applying the integration by parts formula
\[
\mathrm{I}\big(t^{\alpha} \mathrm{I}^{\beta} f(t)\big)(x) = \sum_{j=0}^{\alpha} (-1)^j (\alpha)_j x^{\alpha+1-j} \mathrm{I}^{\beta+1+j} f(x),
\]
where $(\alpha)_j = \alpha(\alpha-1)\cdots (\alpha-j+1)$, we get
\[
(x\mathrm{I})^{n+1} f(x) = \sum_{k=0}^{n-1} \sum_{j=0}^{n-k} (-1)^{k+j} (n-k)_j a(n-1,k) x^{n+1-k-j} \mathrm{I}^{\,n+1+k+j} f(x).
\]

Reindexing with $i=k+j$, we obtain a single sum
\[
(x\mathrm{I})^{\,n+1} f(x) = \sum_{i=0}^{n} (-1)^i a(n,i) x^{\,n+1-i} \mathrm{I}^{\,n+1+i} f(x),
\]
where the coefficients satisfy the combinatorial identity
\begin{align}\label{eq:combiden}
a(n,i) = \sum_{k=0}^{\min(n-1,i)} (n-k)_{i-k} a(n-1,k).
\end{align}

It is straightforward to verify that this recurrence coincides with the Bessel number recurrence
\[
a(n,k) = a(n-1,k) + (n-k+1) a(n,k-1), \quad a(0,0)=1.
\]

Hence, the coefficients $a(n,k)$ are exactly the Bessel numbers.
\end{proof}

\section{Normal Ordering of $(x^\lambda \mathrm{I}^\delta)^n$}

We now generalize the previous result. Let $\lambda,\delta \ge 1$ be integers.

\begin{theorem}
For $n \ge 1$,
\[
(x^\lambda \mathrm{I}^\delta)^n = \sum_{k=0}^{\lambda(n-1)} (-1)^k a_{n,k}^{(\lambda)(\delta)} x^{\lambda n - k} \mathrm{I}^{\delta n + k},
\]
where the coefficients $a_{n,k}^{(\lambda)(\delta)}$ satisfy a recurrence obtained by induction.
\end{theorem}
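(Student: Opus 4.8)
The plan is to mirror the proof of Theorem 1, replacing the single integration-by-parts step by its $\delta$-fold iterate and carrying the extra parameter $\lambda$ through the bookkeeping. I would argue by induction on $n$. The base case $n=1$ is immediate, giving $a_{1,0}^{(\lambda)(\delta)}=1$ and the empty range of corrections.

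The engine of the induction is a normal-ordered formula for $\mathrm{I}^\delta$ acting on a monomial. Writing the Theorem 1 identity in operator form as $\mathrm{I}\,x^\alpha\mathrm{I}^\beta = \sum_{j}(-1)^j (\alpha)_j\, x^{\alpha-j}\mathrm{I}^{\beta+1+j}$, I would iterate it $\delta$ times. The key structural observation is that the coefficient $(\alpha)_j$ depends only on the current $x$-exponent and never on the trailing power of $\mathrm{I}$; hence each successive application of $\mathrm{I}$ produces coefficients that are entirely independent of $\beta$. This yields an expansion
\[
\mathrm{I}^{\delta}\, x^{\alpha}\mathrm{I}^{\beta} = \sum_{s=0}^{\alpha} (-1)^{s}\, C_{\delta}(\alpha,s)\, x^{\alpha - s}\,\mathrm{I}^{\beta+\delta+s},
\]
where $C_1(\alpha,s) = (\alpha)_s$ and, by peeling off one integration, the $C_\delta$ satisfy the convolution recurrence $C_\delta(\alpha,s) = \sum_{s'=0}^{s} C_{\delta-1}(\alpha,s')\,(\alpha - s')_{s-s'}$.

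With this in hand the inductive step is routine. Substituting the hypothesis for $(x^\lambda\mathrm{I}^\delta)^n$, applying the displayed expansion with $\alpha = \lambda n - k$ and $\beta = \delta n + k$, and then premultiplying by $x^\lambda$, I obtain a double sum over $k$ and $s$ in which every term carries $x$-exponent $\lambda(n+1)-(k+s)$ and $\mathrm{I}$-exponent $\delta(n+1)+(k+s)$. Reindexing by $i=k+s$ collapses this to a single sum and lets me read off
\[
a_{n+1,i}^{(\lambda)(\delta)} = \sum_{k} a_{n,k}^{(\lambda)(\delta)}\, C_\delta(\lambda n - k,\; i-k).
\]
I would then verify the summation ranges: since $0\le k\le \lambda(n-1)$ and $0\le s\le \lambda n - k$ (the upper bound forced by $(\alpha)_j=0$ for $j>\alpha$), the new index $i$ runs precisely from $0$ to $\lambda n$, matching the claimed limit $\lambda((n+1)-1)$, while all exponents remain nonnegative. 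Specializing to $\lambda=\delta=1$ gives $C_1=(\alpha)_s$ and reduces the recurrence to the Bessel identity \eqref{eq:combiden} of Theorem 1, a useful consistency check.

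The main obstacle is the middle step: obtaining a usable description of the $\delta$-fold iterate $C_\delta$. Iterating integration by parts $\delta$ times a priori produces a $\delta$-fold nested sum, and the real work lies in recognizing — thanks to the $\beta$-independence of the coefficients — that it collapses into the single convolution recurrence above, so that the resulting two-index recurrence for $a_{n,k}^{(\lambda)(\delta)}$ is genuinely closed rather than an unwieldy multiple sum. Once $C_\delta$ is pinned down, the remaining reindexing and range verification are exactly the bookkeeping already performed in Theorem 1.
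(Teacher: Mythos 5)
Your proposal is correct, proves the theorem as stated, and follows the same overall route as the paper: induction on $n$ driven by a normal-ordered expansion of $\mathrm{I}^\delta\bigl(x^\alpha\mathrm{I}^\beta\bigr)$ with $\beta$-independent coefficients, then the reindexing $i=k+s$. The one substantive difference is where you stop: you leave the iterate's coefficients $C_\delta(\alpha,s)$ defined only by the convolution recurrence $C_\delta(\alpha,s)=\sum_{s'=0}^{s}C_{\delta-1}(\alpha,s')\,(\alpha-s')_{s-s'}$, whereas the paper evaluates them in closed form, and that evaluation is the main combinatorial point of this section. In the $\delta$-fold nested sum over $0\le k_1\le k_2\le\cdots\le k_\delta\le\alpha$ the falling factorials telescope,
\[
(\alpha)_{k_1}(\alpha-k_1)_{k_2-k_1}\cdots(\alpha-k_{\delta-1})_{k_\delta-k_{\delta-1}}=(\alpha)_{k_\delta},
\]
so each summand depends only on the total drop $k_\delta$, and the inner indices merely count weakly increasing sequences $0\le k_1\le\cdots\le k_{\delta-1}\le k_\delta$, i.e.\ weak compositions of $k_\delta$ into $\delta$ parts, of which there are $\binom{k_\delta+\delta-1}{\delta-1}$. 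Hence
\[
C_\delta(\alpha,s)=(\alpha)_s\binom{s+\delta-1}{\delta-1},
\qquad
a^{(\lambda,\delta)}_{n+1,j}=\sum_{k=0}^{\min(j,\lambda(n-1))}(\lambda n-k)_{j-k}\binom{j-k+\delta-1}{\delta-1}\,a^{(\lambda,\delta)}_{n,k},
\]
a fully explicit recurrence rather than one that is closed only modulo the auxiliary family $C_\delta$. Note that what you flag as the ``main obstacle'' dissolves in one line from your own recurrence: assuming $C_{\delta-1}(\alpha,s')=(\alpha)_{s'}\binom{s'+\delta-2}{\delta-2}$, the telescoping identity $(\alpha)_{s'}(\alpha-s')_{s-s'}=(\alpha)_s$ pulls $(\alpha)_s$ out of the convolution, and the hockey-stick sum $\sum_{s'=0}^{s}\binom{s'+\delta-2}{\delta-2}=\binom{s+\delta-1}{\delta-1}$ finishes an induction on $\delta$. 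Adding that observation would make your argument coincide with the paper's; it is also what the generalization actually buys, since $\lambda$ enters only through bookkeeping (exactly as you say) and all the genuinely new content is this binomial factor.
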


\begin{proof}

We prove the formula by repeated application of the integration-by-parts identity:
\[
\mathrm{I}\big(x^\alpha \mathrm{I}^\beta \big) 
= \sum_{j=0}^\alpha (-1)^j (\alpha)_j \, x^{\alpha-j} \mathrm{I}^{\beta+1+j} ,
\]
where $(\alpha)_j = \alpha(\alpha-1)\cdots (\alpha-j+1)$ is the falling factorial.  

Let us compute the following example
\[
\mathrm{I}^2 \bigl( x^{5}\mathrm{I}^3 \bigr)
= 
\mathrm{I}\!\left( \mathrm{I}(x^{5}\mathrm{I}^3) \right)
\]

Using the previous identity, we obtain
\[
\mathrm{I}(x^{5}\mathrm{I}^3)
=
\sum_{k=0}^{5}
(-1)^k (5)_k\, x^{5-k} \mathrm{I}^{\,4+k}.
\]

Applying \(\mathrm{I}\) again,
\[
\mathrm{I}^2(x^{5}\mathrm{I}^3)
=
\sum_{k=0}^{5} (-1)^k (5)_k\,
\mathrm{I}\!\bigl( x^{5-k} \mathrm{I}^{\,4+k} \bigr).
\]

Using the same identity,
\[
\mathrm{I}\!\bigl( x^{5-k} \mathrm{I}^{\,4+k} \bigr)
=
\sum_{i=0}^{5-k}
(-1)^i (5-k)_i\,
x^{5-k-i}\mathrm{I}^{\,5+k+i}.
\]

Therefore
\begin{align*}
\mathrm{I}^2(x^{5}\mathrm{I}^3)
&=
\sum_{k=0}^{5}\sum_{i=0}^{5-k}
(-1)^{k+i}\,
(5)_k\,(5-k)_i\;
x^{5-k-i}\,
\mathrm{I}^{\,5+k+i}\\
&=\sum_{k=0}^{5}\sum_{j=k}^{5}
(-1)^j\,
(5)_k\,(5-k)_{j-k}\;
x^{5-j}\,
\mathrm{I}^{\,5+j}
\end{align*}

Applying $\mathrm{I}$ \emph{$\delta$ times} gives a nested sum
\begin{align*}
\mathrm{I}^\delta(x^\alpha \mathrm{I}^\beta) 
&= \sum_{k_1=0}^\alpha \sum_{k_2=k_1}^\alpha \cdots \sum_{k_\delta=k_{\delta-1}}^\alpha 
(-1)^{k_\delta} (\alpha)_{k_1} (\alpha-k_1)_{k_2-k_1} \cdots (\alpha - k_{\delta-1})_{k_\delta - k_{\delta-1}}
x^{\alpha - k_\delta} \mathrm{I}^{\beta + \delta + k_\delta}\\
&=\sum_{k_\delta=0}^\alpha(-1)^{k_\delta} (\alpha)_{k_\delta}\left(\sum_{k_{\delta-1}=0}^{k_\delta} \cdots \sum_{k_1=0}^{k_2}\right)
x^{\alpha - k_\delta} \mathrm{I}^{\beta + \delta + k_\delta}
\end{align*}

Now, observe that the nested sums count all \emph{weakly increasing sequences}:
\[
0 \le k_1 \le k_2 \le \dots \le k_\delta.
\]

For a fixed value of $k_\delta$, the number of sequences $(k_1, \dots, k_{\delta-1})$ is exactly the number of weak compositions of $k_\delta$ into $\delta$ parts, which is given by the binomial coefficient
\[
\sum_{0 \le k_1 \le k_2 \le \dots \le k_{\delta-1} \le k_\delta} 1 = \binom{k_\delta + \delta - 1}{\delta - 1}.
\]

Hence, we can evaluate the nested sum and reduce it to a single sum over $k_\delta$:
\[
\mathrm{I}^\delta(x^\alpha \mathrm{I}^\beta) 
= \sum_{k_\delta=0}^\alpha (-1)^{k_\delta} (\alpha)_{k_\delta} 
\binom{k_\delta + \delta - 1}{\delta - 1} \, x^{\alpha - k_\delta} \mathrm{I}^{\beta + \delta + k_\delta}.
\]

This gives the desired formula, where the multiple sum is evaluated as a binomial coefficient.
\end{proof}

\[
(x^\lambda \mathrm{I}^\delta)^{n+1}
    = \sum_{k=0}^{\lambda(n-1)} (-1)^k 
      a^{(\lambda,\delta)}_{n,k}\,
      (x^\lambda \mathrm{I}^\delta)(x^{\lambda n-k} \mathrm{I}^{\delta n+k})
\]

\[
\sum_{k=0}^{\lambda(n-1)} 
    \sum_{i=0}^{\lambda n-k}
 (-1)^{i+k}\, a^{(\lambda,\delta)}_{n,k}\,
 (\lambda n - k)_i \binom{i+\delta-1}{\delta -1}
 x^{\lambda(n+1)-k-i}\, \mathrm{I}^{\delta(n+1)+k+i}
\]

Let \( j = k+i \). Then

\[
= \sum_{j=0}^{\lambda n}
  (-1)^j \, a^{(\lambda,\delta)}_{n+1,j}\,
  x^{\lambda(n+1)-j}\, \mathrm{I}^{\delta(n+1)+j}.
\]

Hence the recurrence:

\[
a^{(\lambda,\delta)}_{n+1,j}
  = \sum_{k=0}^{\min(j,\lambda(n-1))}
     (\lambda n-k)_{j-k}
     \binom{j-k+\delta-1}{\delta -1}\,
     a^{(\lambda,\delta)}_{n,k}.
\]

these numbers generalize the Bessel numbers for $\lambda=\delta=1$.

\section{Normal Ordering of any arbitrary word}

\begin{theorem}
Let 
\[
w
= x^{r_n}\mathrm{I}^{s_n}\cdots x^{r_2}\mathrm{I}^{s_2}x^{r_1}\mathrm{I}^{s_1}
\]
be an arbitrary word.
Then $w$ has the normal–ordered expansion

\begin{align*}w&=\sum_{p_{n-1}=0}^{S_{n-1}}
\sum_{p_{n-2}=0}^{\min(p_{n-1},S_{n-2})}
\cdots
\sum_{p_1=0}^{\min(p_2,S_1)}
(-1)^{p_{n-1}}\\
&\times (S_1)_{p_1}(S_2-p_1)_{p_2-p_1}
\,(S_3-p_2)_{p_3-p_2}
\cdots
(S_{n-1}-p_{n-2})_{p_{n-1}-p_{n-2}} \\
&\times
\binom{p_1+s_2-1}{s_2-1}
\binom{p_2-p_1+s_3-1}{s_3-1}
\cdots
\binom{p_{n-1}-p_{n-2}+s_n-1}{s_n-1} \\
&\times
x^{\,S_n-(p_{n-1}-p_{n-2})}
\,\mathrm{I}^{\,s+(p_{n-1}-p_{n-2})}.
\end{align*}
where \(S_j:=r_1+\cdots+r_j\) and \(s:=s_1+\cdots+s_n\).
\end{theorem}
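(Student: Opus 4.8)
The plan is to induct on the number $n$ of blocks $x^{r_k}\mathrm{I}^{s_k}$, normal-ordering the word from its innermost (rightmost) factor outward and using the $\delta$-fold integration-by-parts identity
\[
\mathrm{I}^\delta\bigl(x^\alpha \mathrm{I}^\beta\bigr)=\sum_{k=0}^{\alpha}(-1)^k (\alpha)_k \binom{k+\delta-1}{\delta-1}\,x^{\alpha-k}\mathrm{I}^{\beta+\delta+k}
\]
established in the proof of Theorem 2 as the single engine of the computation. The base case $n=1$ is the already normal-ordered block $x^{r_1}\mathrm{I}^{s_1}$: with the convention that an empty product of sums contributes $1$, it matches the claimed expression with $S_1=r_1$ and $s=s_1$.

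For the inductive step, I would suppose the word in $n$ blocks has been brought to the form
\[
\sum_{\substack{0\le p_1\le\cdots\le p_{n-1}\\ p_k\le S_k}} (-1)^{p_{n-1}}\,\Pi(p_1,\dots,p_{n-1})\, x^{\,S_n-p_{n-1}}\,\mathrm{I}^{\,s+p_{n-1}},
\]
where $\Pi$ abbreviates the product of falling factorials and binomials already accumulated. I then left-multiply by the next block $x^{r_{n+1}}\mathrm{I}^{s_{n+1}}$. The leading $x^{r_{n+1}}$ simply commutes to the front, while $\mathrm{I}^{s_{n+1}}$ must be pushed through $x^{\,S_n-p_{n-1}}$; applying the displayed identity with $\alpha=S_n-p_{n-1}$, $\beta=s+p_{n-1}$, and $\delta=s_{n+1}$ introduces exactly one fresh summation variable $q$ with $0\le q\le S_n-p_{n-1}$.

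The heart of the argument is the bookkeeping of the reindexing $p_n:=p_{n-1}+q$. One checks that the new sign $(-1)^q$ telescopes with $(-1)^{p_{n-1}}$ to give $(-1)^{p_n}$; that the factor $(\alpha)_q=(S_n-p_{n-1})_{p_n-p_{n-1}}$ appends to $\Pi$ in exactly the pattern $(S_k-p_{k-1})_{p_k-p_{k-1}}$; that the binomial $\binom{q+s_{n+1}-1}{s_{n+1}-1}=\binom{p_n-p_{n-1}+s_{n+1}-1}{s_{n+1}-1}$ appends in the pattern $\binom{p_k-p_{k-1}+s_{k+1}-1}{s_{k+1}-1}$; and that the exponents collapse, via $S_{n+1}=S_n+r_{n+1}$ and $s\mapsto s+s_{n+1}$, to $x^{\,S_{n+1}-p_n}\mathrm{I}^{\,s+p_n}$. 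Finally the constraint $p_{n-1}\le p_n\le S_n$ produced at this step, combined with the inductive constraints, is precisely what the nested upper limits $p_{k}\le\min(p_{k+1},S_k)$ in the statement encode.

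I expect the main obstacle to be purely notational rather than conceptual: keeping the cascade of indices straight and verifying that the nested $\min$-bounds faithfully reproduce the constraint set $0\le p_1\le\cdots\le p_{n-1}$ together with $p_k\le S_k$. One point worth flagging is that the cumulative shift after $n$ blocks telescopes to $p_{n-1}$, so the outgoing exponents in the final line are $x^{\,S_n-p_{n-1}}$ and $\mathrm{I}^{\,s+p_{n-1}}$; I would present the induction so that these \emph{total} shifts, rather than the single increment $p_{n-1}-p_{n-2}$, govern the closing powers of $x$ and $\mathrm{I}$.
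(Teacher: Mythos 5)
Your proof is correct, and its engine is the same as the paper's: both push each block's $\mathrm{I}^{s_{k+1}}$ through the already normal-ordered prefix using the identity $\mathrm{I}^{\delta}\bigl(x^{\alpha}\mathrm{I}^{\beta}\bigr)=\sum_{q=0}^{\alpha}(-1)^q(\alpha)_q\binom{q+\delta-1}{\delta-1}x^{\alpha-q}\mathrm{I}^{\beta+\delta+q}$. The difference is organizational: the paper first unwinds the whole iteration into a nested sum over local increments $i_1,\dots,i_{n-1}$ and only afterwards substitutes the cumulative variables $p_j=i_1+\cdots+i_j$ and inverts the order of summation, whereas you carry the cumulative variables through a one-step induction, so each new block contributes exactly one fresh index $q=p_n-p_{n-1}$, and the sign, falling factorial, binomial, constraint, and exponent bookkeeping all close up immediately. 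That is a cleaner presentation of the same computation, and your observation that the nested $\min$-bounds encode exactly the chain $0\le p_1\le\cdots\le p_{n-1}$ together with $p_k\le S_k$ is the same equivalence the paper uses.

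The point you flag at the end, however, is not a stylistic choice: it is a correction of an error in the paper. The exponents your induction produces, $x^{S_n-p_{n-1}}\mathrm{I}^{s+p_{n-1}}$, are the correct ones; the theorem's printed exponents $x^{S_n-(p_{n-1}-p_{n-2})}\mathrm{I}^{s+(p_{n-1}-p_{n-2})}$ are wrong for $n\ge 3$. The error originates in the paper's own proof: in the three-block display the result is written with $x^{r_1+r_2+r_3-j}\,\mathrm{I}^{s_1+s_2+s_3+j}$, when the shifts must accumulate to $x^{r_1+r_2+r_3-i-j}\,\mathrm{I}^{s_1+s_2+s_3+i+j}$; the paper then converts the falling factorials to $p$-variables as though the shifts were cumulative (obtaining the correct $(S_j-p_{j-1})_{p_j-p_{j-1}}$), yet keeps only the last increment $i_{n-1}=p_{n-1}-p_{n-2}$ in the exponents, so the final statement is internally inconsistent. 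A concrete check: for $n=3$ and $r_j=s_j=1$, Theorem 1 gives $(x\mathrm{I})^3=x^3\mathrm{I}^3-3x^2\mathrm{I}^4+3x\mathrm{I}^5$ (Bessel row $1,3,3$); your formula reproduces this, while the formula as printed collapses to $-x^2\mathrm{I}^4+2x\mathrm{I}^5$. So you should state plainly that your induction proves a corrected theorem: the summation domain, sign, falling factorials, and binomials are as stated, but the closing powers of $x$ and $\mathrm{I}$ must be $S_n-p_{n-1}$ and $s+p_{n-1}$.
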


\begin{proof}

We have

\[
x^{r_2}\mathrm{I}^{s_2}x^{r_1}\mathrm{I}^{s_1}=\sum_{i=0}^{r_1}(-1)^i(r_1)_i\binom{i+s_2-1}{s_2-1}x^{r_2+r_1-i}\mathrm{I}^{s_1+s_2+i}
\]
Then
\begin{align*}
x^{r_3}\mathrm{I}^{s_3}x^{r_2}\mathrm{I}^{s_2}x^{r_1}\mathrm{I}^{s_1}&=\sum_{i=0}^{r_1}(-1)^i(r_1)_i\binom{i+s_2-1}{s_2-1}x^{r_3}\mathrm{I}^{s_3}\left(x^{r_2+r_1-i}\mathrm{I}^{s_1+s_2+i}\right)\\
&=\sum_{i=0}^{r_1}\sum_{j=0}^{r_1+r_2-i}(-1)^{i+j}(r_1)_i(r_1+r_2-i)_j\binom{i+s_2-1}{s_2-1}\binom{j+s_3-1}{s_3-1}\\
&x^{r_1+r_2+r_3-j}\,\mathrm{I}^{s_1+s_2+s_3+j}.
\end{align*}

Hence
\begin{align*}
w&=\sum_{i_1=0}^{r_1}\sum_{i_2=0}^{r_1+r_2-i_1}\cdots \sum_{i_{n-1}=0}^{r_1+\cdots+r_{n-1}-i_{n-2}}\\
&(-1)^{i_1+\cdots+i_{n-1}}
(r_1)_{i_1}(r_1+r_2-i_1)_{i_2}\cdots
(r_1+\cdots+r_{n-1}-i_{n-2})_{i_{n-1}}\\
&\times
\binom{i_1+s_2-1}{s_2-1}
\binom{i_2+s_3-1}{s_3-1}
\cdots
\binom{i_{n-1}+s_n-1}{s_n-1}
\, x^{r_1+\cdots+r_n-i_{n-1}}
\,\mathrm{I}^{s_1+\cdots+s_n+i_{n-1}}
\end{align*}
Now our goal is to invert\[\sum_{i_1=0}^{r_1}\sum_{i_2=0}^{r_1+r_2-i_1}\cdots \sum_{i_{n-1}=0}^{r_1+\cdots+r_{n-1}-i_{n-2}} \!\!F(i_1,\dots,i_{n-1}) \] where
\[
F(i_1,\dots,i_{n-1})
=(-1)^{i_1+\cdots+i_{n-1}}
(r_1)_{i_1}(r_1+r_2-i_1)_{i_2}\cdots
(r_1+\cdots+r_{n-1}-i_{n-2})_{i_{n-1}}
\]
\[
\qquad\times
\binom{i_1+s_2-1}{s_2-1}
\binom{i_2+s_3-1}{s_3-1}
\cdots
\binom{i_{n-1}+s_n-1}{s_n-1}
\, x^{r_1+\cdots+r_n-i_{n-1}}
\,\mathrm{I}^{s_1+\cdots+s_n+i_{n-1}}.
\]

Define cumulative sums
\[
p_j:=i_1+i_2+\cdots+i_j\qquad(1\le j\le n-1),
\qquad p_0:=0.
\]
Then \(i_j=p_j-p_{j-1}\) for each \(j\).

The original summation region
\[
0\le i_1\le r_1,\qquad
0\le i_2\le r_1+r_2-i_1,\qquad\ldots,\qquad
0\le i_{n-1}\le r_1+\cdots+r_{n-1}-i_{n-2}
\]
is equivalent, after the substitution \(i_j=p_j-p_{j-1}\), to the monotone chain of inequalities
\[
0\le p_1\le p_2\le\cdots\le p_{n-1}\le S_{n-1},
\qquad\text{where }S_j:=r_1+\cdots+r_j.
\]
Hence we may invert the summation order and sum first over \(p_{n-1}\), then \(p_{n-2}\), down to \(p_1\). The inverted summation domain is
\[
\sum_{p_{n-1}=0}^{S_{n-1}}
\sum_{p_{n-2}=0}^{\min(p_{n-1},S_{n-2})}
\cdots
\sum_{p_1=0}^{\min(p_2,S_1)}.
\]

We now rewrite every factor of \(F\) in terms of the \(p_j\).
The sign becomes
\[
(-1)^{i_1+\cdots+i_{n-1}} = (-1)^{p_{n-1}}.
\]
The falling factorials transform as
\[
(r_1)_{i_1}=(S_1)_{p_1},
\qquad
(r_1+r_2-i_1)_{i_2}=(S_2-p_1)_{p_2-p_1},
\]
\[
\ldots,\qquad
(S_{n-1}-p_{n-2})_{p_{n-1}-p_{n-2}}.
\]
The binomial factors become
\[
\binom{i_1+s_2-1}{s_2-1}=\binom{p_1+s_2-1}{s_2-1},
\qquad
\binom{i_2+s_3-1}{s_3-1}=\binom{p_2-p_1+s_3-1}{s_3-1},
\]
\[
\ldots,\qquad
\binom{i_{n-1}+s_n-1}{s_n-1}
=\binom{p_{n-1}-p_{n-2}+s_n-1}{s_n-1}.
\]
Finally, since \(i_{n-1}=p_{n-1}-p_{n-2}\), the power factors are
\[
x^{r_1+\cdots+r_n-i_{n-1}}=x^{S_n-(p_{n-1}-p_{n-2})},
\qquad
\mathrm{I}^{s_1+\cdots+s_n+i_{n-1}}=\mathrm{I}^{\,s+(p_{n-1}-p_{n-2})},
\]
where \(S_n:=r_1+\cdots+r_n\) and \(s:=s_1+\cdots+s_n\).

Combining these expressions yields the inverted sum:
\[
\begin{aligned}
&\sum_{p_{n-1}=0}^{S_{n-1}}
\sum_{p_{n-2}=0}^{\min(p_{n-1},S_{n-2})}
\cdots
\sum_{p_1=0}^{\min(p_2,S_1)}
(-1)^{p_{n-1}}\, (S_1)_{p_1} \\
&\quad\times (S_2-p_1)_{p_2-p_1}
\,(S_3-p_2)_{p_3-p_2}
\cdots
(S_{n-1}-p_{n-2})_{p_{n-1}-p_{n-2}} \\
&\quad\times
\binom{p_1+s_2-1}{s_2-1}
\binom{p_2-p_1+s_3-1}{s_3-1}
\cdots
\binom{p_{n-1}-p_{n-2}+s_n-1}{s_n-1} \\
&\quad\times
x^{\,S_n-(p_{n-1}-p_{n-2})}
\,\mathrm{I}^{\,s+(p_{n-1}-p_{n-2})}.
\end{aligned}
\]

\end{proof}

\section*{Acknowledgments}
I conducted this research independently, without formal university training in mathematics, as a primary school teacher in a mountain school.

\end{document}